\documentclass[12pt, a4paper]{article}

\usepackage[utf8]{inputenc}
\usepackage{amsmath}
\usepackage{amsfonts}
\usepackage{amssymb}
\usepackage{amsthm}
\usepackage{graphicx}
\usepackage{hyperref}
\usepackage{geometry}

\newtheorem{theorem}{Theorem}

\title{Perimeter of an Ellipse: Understanding Ramanujan's Approximation}
\author{Uday Shankar}
\date{}

\begin{document}

\maketitle

\section{Introduction}

Srinivasa Ramanujan \cite{ramanujan1962collected_standard} mentions two approximations to the perimeter of an ellipse that are amazingly accurate. However, he does not provide an explanation of how he arrived at those expressions. In this paper, we will try to provide such an explanation. We start with a (possibly) well known derivation of the exact perimeter of an ellipse as an infinite series using the arc length formula. This infinite series can serve as the starting point for our explanation. We convert this series into its continued fraction expansion and show that Ramanujan's formula can be derived from different approximations of the continued fractions. Following the derivation, we improve these approximations using two strategies: (1) we apply a correction using a slight perturbation to Ramanujan's second approximation to get an alternate expression and (2) we extend the idea used by Ramanujan to approximate the tail coefficients of the continued fraction better. These expressions, although not as elegant as Ramanujan's, give a better approximation to the perimeter of an ellipse. Villarino ~\cite{villarino2005ramanujansperimeterellipse} presented a detailed error analysis to Ramanujan's second approximation. Cantrell ~\cite{Cantrell2004} provides a different approximation that is worse than Ramanujan's expression for ellipses with lower eccentricity, but is better when the eccentricity gets closer to 1. However, to the best of our knowledge, ours is the first attempt to explain Ramanujan's ellipse perimeter formula and our approximation is uniformly better than his expression.

\section{Exact Derivation}

In this section, we will derive the exact form for the perimeter of the ellipse using well-known techniques from calculus and complex analysis. We will start by introducing some basic identities which we will use later in the paper. 

\subsection{Basic Identities}

The extension of the real cosine function to the complex plane, denoted as $\cos(z)$ for complex number $z$, is defined based on Euler's formula, $e^{iz} = \cos(z) + i\sin(z)$. By considering the complementary equation $e^{-iz} = \cos(z) - i\sin(z)$ and adding the two, we arrive at the definition:
\begin{equation}
    \cos(z) = \frac{e^{iz} + e^{-iz}}{2}
\end{equation}

\noindent Next, we derive the infinite series expansion for the function $f(x) = (1-x)^{1/2}$. We utilize the generalized binomial theorem, which states that for any real number $\alpha$, $(1+u)^\alpha = \sum_{n=0}^{\infty} \binom{\alpha}{n} u^n$ for $|u| < 1$, where the generalized binomial coefficient is defined as $\binom{\alpha}{n} = \frac{\alpha(\alpha-1)\dots(\alpha-n+1)}{n!}$. For our function, we set $u = -x$ and $\alpha = \frac{1}{2}$. The expansion is
\begin{align}
    (1-x)^{1/2} &= \sum_{n=0}^{\infty} \binom{1/2}{n} (-x)^n \\
    &= 1 + \sum_{n=1}^{\infty} (-1)^n \frac{(1/2)(-1/2)(-3/2)\cdots(3-2n)/2}{n!} x^n \\
    &= 1 + \sum_{n=1}^{\infty} \frac{-1.1.3.5\cdots(2n-3)}{2^n n!} x^n \\
    &= \sum_{n=0}^{\infty} \frac{-1.1.3.5\cdots(2n-3)}{2^n n!} x^n
\end{align}

\noindent The first few coefficients of the above series is
\begin{align*}
    n=0: \quad \binom{1/2}{0}(-x)^0 &= 1 \cdot 1 = 1 \\
    n=1: \quad \binom{1/2}{1}(-x)^1 &= \left(\frac{1}{2}\right)(-x) = -\frac{1}{2}x = \frac{-1}{2^1 ~1!}x\\
    n=2: \quad \binom{1/2}{2}(-x)^2 &= \frac{(\frac{1}{2})(\frac{1}{2}-1)}{2!}x^2 = \frac{(\frac{1}{2})(-\frac{1}{2})}{2}x^2 = -\frac{1}{8}x^2 = \frac{-1.1}{2^2 ~2!}x^2\\
    n=3: \quad \binom{1/2}{3}(-x)^3 &= \frac{(\frac{1}{2})(-\frac{1}{2})(-\frac{3}{2})}{3!}(-x^3) = \frac{\frac{3}{8}}{6}(-x^3) = -\frac{1}{16}x^3 = \frac{-1.1.3}{2^3 ~3!}x^3\\
    n=4: \quad \binom{1/2}{4}(-x)^4 &= \frac{(\frac{1}{2})(-\frac{1}{2})(-\frac{3}{2})(-\frac{5}{2})}{4!}x^4 = \frac{-\frac{15}{16}}{24}x^4 = -\frac{5}{128}x^4 = \frac{-1.1.3.5}{2^4 ~4!}x^4
\end{align*}
Substituting these back into the summation yields the final series expansion, valid for $|x| < 1$,
\begin{equation}
    (1-x)^{1/2} = 1 - \frac{1}{2}x - \frac{1}{8}x^2 - \frac{1}{16}x^3 - \frac{5}{128}x^4 - \dots
\end{equation}

\subsection{Perimeter of Ellipse using the Arc Length Formula}

An ellipse centered at the origin with semi-major axis $a$ and semi-minor axis $b$ (assuming $a > b > 0$) can be described by the parametric equations:
\begin{align*}
    x(t) &= a \cos t \\
    y(t) &= b \sin t
\end{align*}
for $t \in [0, 2\pi]$. The arc length $S$ of a parametric curve is given by the integral:
\[ S = \int_{t_1}^{t_2} \sqrt{\left(\frac{dx}{dt}\right)^2 + \left(\frac{dy}{dt}\right)^2} \, dt \]

\noindent The derivatives in the arc length expression are
\begin{align*}
    \frac{dx}{dt} &= -a \sin t \\
    \frac{dy}{dt} &= b \cos t
\end{align*}

\noindent Substituting these into the arc length formula for the full perimeter $P$ gives
\[ P = \int_0^{2\pi} \sqrt{(-a \sin t)^2 + (b \cos t)^2} \, dt = \int_0^{2\pi} \sqrt{a^2 \sin^2 t + b^2 \cos^2 t} \, dt \]

\noindent The integrand $a^2 \sin^2 t + b^2 \cos^2 t$ can be written in two ways: $a^2 + (b^2 - a^2) \cos^2 t$ or $b^2 + (a^2 - b^2) \sin^2 t$. Adding the two expressions and dividing by 2 gives us 

\begin{align}
a^2 \sin^2 t + b^2 \cos^2 t &= \frac{1}{2} ((a^2 + b^2) + (b^2 - a^2)(\cos^2 t - \sin^2 t)) \\
&= \frac{1}{2} \left( \frac{(a+b)^2}{2} + \frac{(b-a)^2}{2} + \frac{2}{2}(b^2 - a^2) \cos 2t \right) \\
&= \frac{(a+b)^2}{4} \left( 1 + \left(\frac{b-a}{a+b}\right)^2 - 2 \left(\frac{a-b}{a+b}\right) \cos 2t \right) \\
&= \left( \frac{a+b}{2} \right)^2 (u^2 - 2u \cos 2t + 1) \\
&= \left( \frac{a+b}{2} \right)^2 (u^2 - u (e^{2it} + e^{-2it}) + 1) \\
&= \left( \frac{a+b}{2} \right)^2 (u - e^{2it})(u - e^{-2it}) \\
&= \left( \frac{a+b}{2} \right)^2 (1 - ue^{2it})(1 - ue^{-2it}) 
\end{align}

\noindent where $u = \frac{a-b}{a+b}$ and we replace the cosine function with its complex extension. Substituting this new expression back into the formula for the perimeter, we get

\begin{align}
    P &= 
    (a+b) \int_0^{\pi} \sqrt{(1 - ue^{2it})(1 - ue^{-2it})} \, dt \label{eq:perim1}
\end{align}

\noindent Now we prove the following result attributed to James Ivory in the late $18^{th}$ century.

\begin{theorem}
If $0 \leqslant u \leqslant 1$ then 
\begin{equation} 
\frac{1}{\pi} \int_{0}^{\pi} \sqrt{1 - ue^{2it}} \sqrt{1 - ue^{-2it}} \, dt = \sum_{n=0}^{\infty} \left\{ \frac{-1.1.3.5\cdots(2n-3)}{2^n n!} \right\}^2 u^{2n} 
\end{equation}    
\end{theorem}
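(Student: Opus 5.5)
We expand each square-root factor with the binomial series for $(1-x)^{1/2}$ derived above, multiply the two series, and integrate term by term; orthogonality of the exponentials $e^{2ikt}$ on $[0,\pi]$ then leaves only the diagonal terms. Write $c_n = \frac{-1.1.3.5\cdots(2n-3)}{2^n n!}$, so that for $|x|<1$ we have $(1-x)^{1/2}=\sum_{n\ge 0} c_n x^n$.

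First take $0\leqslant u<1$. Since $|ue^{\pm 2it}| = u<1$, we get
\begin{equation*}
\sqrt{1-ue^{2it}}=\sum_{m\ge0} c_m u^m e^{2imt}, \qquad \sqrt{1-ue^{-2it}}=\sum_{n\ge0} c_n u^n e^{-2int}.
\end{equation*}
Here the square root of the complex quantity $1-ue^{\pm 2it}$ means the principal branch. This branch is what the binomial series produces, because $\operatorname{Re}(1-ue^{\pm2it})>0$. The two factors are complex conjugates, so their product is $|1-ue^{2it}|$. This is the positive square root of the integrand in \eqref{eq:perim1}, as required.

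Both series converge absolutely and uniformly in $t$, since $\sum |c_n| u^n<\infty$ for $u<1$. Hence their Cauchy product $\sum_{m,n} c_m c_n u^{m+n} e^{2i(m-n)t}$ also converges absolutely and uniformly, and may be integrated term by term over $[0,\pi]$. Using
\begin{equation*}
\frac{1}{\pi}\int_0^\pi e^{2i(m-n)t}\,dt = \begin{cases} 1, & m=n,\\ 0, & m\neq n,\end{cases}
\end{equation*}
which holds because $e^{2ikt}$ has period $\pi$, every off-diagonal term vanishes. What remains is $\sum_n c_n^2 u^{2n}$, which is the claimed identity for $u<1$.

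The main obstacle is the endpoint $u=1$, which corresponds to the degenerate ellipse $b=0$. There I would first show that $\sum c_n^2$ converges. Since $c_n=\binom{1/2}{n}(-1)^n$, we have $|c_n| \sim \frac{1}{2\sqrt{\pi}}\, n^{-3/2}$, so $c_n^2 = O(n^{-3})$. (In fact $\sum|c_n|$ already converges: it equals $2$, since $\sum_{n\ge1}|c_n| = 1-(1-1)^{1/2}=1$.) Because $\sum|c_n|$ converges, the binomial series for $(1-x)^{1/2}$ converges absolutely and uniformly on the closed disc $|x|\le 1$. It therefore represents the continuous function $\sqrt{1-x}$ there, including at $|x|=1$. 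So the argument above goes through verbatim at $u=1$.

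Alternatively, one can pass to the limit $u\to1^-$. The left side is continuous in $u$ by dominated convergence, since the integrand is bounded by $2$. The right side is continuous on $[0,1]$ by Abel's theorem, or simply by uniform convergence of a power series with summable nonnegative coefficients.
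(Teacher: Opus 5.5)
Your proof is correct and follows the same route as the paper. You expand both square roots by the binomial series, multiply, integrate term by term, and keep only the diagonal terms $m=n$ by orthogonality of $e^{2i(m-n)t}$ on $[0,\pi]$. You also supply what the paper leaves implicit: the branch of the square root, the absolute and uniform convergence that justifies swapping sum and integral, and the endpoint $u=1$ via $\sum|c_n|<\infty$. Finally, you use the correct exponent $e^{2i(m-n)t}$, where the paper's displayed $e^{2\pi i(m-n)t}$ is a typo.
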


\begin{proof} The proof uses our derivation for the series expansion of $(1-x)^{1/2}$. 
 \begin{align*}
\frac{1}{\pi} &\int_{0}^{\pi} \sqrt{1 - ue^{2it}} \sqrt{1 - ue^{-2it}} \, dt \\
&= \frac{1}{\pi} \int_{0}^{\pi} \sum_{m=0}^{\infty} \left\{ \frac{-1.1.3.5\cdots(2m-3)}{2^m m!} \right\} u^m e^{2imt} \sum_{n=0}^{\infty} \left\{ \frac{-1.1.3.5\cdots(2n-3)}{2^n n!} \right\} u^n e^{-2int} \, dt \\
&= \frac{1}{\pi} \sum_{m=0}^{\infty} \left\{ \frac{-1.1.3.5\cdots(2m-3)}{2^m m!} \right\} u^m \sum_{n=0}^{\infty} \left\{ \frac{-1.1.3.5\cdots(2n-3)}{2^n n!} \right\} u^n \int_{0}^{\pi} e^{2\pi i(m-n)t} \, dt \\
&= \sum_{n=0}^{\infty} \left\{ \frac{-1.1.3.5\cdots(2n-3)}{2^n n!} \right\}^2 u^{2n}
\end{align*} 

\noindent The final step can be obtained by noticing that the integral is non-zero only when $m = n$.  
\end{proof}

\noindent We can now substitute the result of the above theorem into equation ~\ref{eq:perim1} to get

\begin{align}
    P &= \pi (a+b) \left( \sum_{n=0}^{\infty} \left\{ \frac{-1.1.3.5\cdots(2n-3)}{2^n n!} \right\}^2 u^{2n} \right) \\
    &= \pi (a+b) ~E(x)
\end{align}

\noindent where $x = u^2 = \left( \frac{a-b}{a+b} \right)^2$ and 

\begin{align*}
    E(x) &= \sum_{n=0}^\infty \left( \frac{-1.1.3.5\cdots(2n-3)}{2^n n!} \right)^2 x^n \\
    &= \sum_{n=0}^\infty \left( \frac{1.3.5\cdots(2n-3)}{2^n n!} \right)^2 x^n \\
    &= \sum_{n=0}^\infty \left( \frac{1.2.3.4.5\cdots(2n-3)(2n-2)}{2^{n-1} (n-1)! 2^n n!} \right)^2 x^n \\
    &= \sum_{n=0}^\infty \left( \frac{1.2.3.4.5\cdots(2n-3)(2n-2)(2n-1)(2n)}{(2n-1) 2n 2^{n-1} (n-1)! 2^n n!} \right)^2 x^n \\
    &= \sum_{n=0}^\infty \left( \frac{(2n)!}{(2n-1) 2^n n! 2^n n!} \right)^2 x^n \\
    &= \sum_{n=0}^\infty \left( \frac{1}{(2n-1) 4^n} \binom{2n}{n}\right)^2 x^n \\
\end{align*}

\noindent By forming different approximations of the function $E(x)$, we can derive simpler expressions for the perimeter of the ellipse.

\section{Derivation of Ramanujan's Approximations via Continued Fractions \label{sec:sec3}}

In his collected works, 
Srinivasa Ramanujan \cite{ramanujan1962collected_standard} provided two approximations for the perimeter of an allipse. He mentions that he arrived at these expressions empirically, although he never explained his approach. The first expression is

\begin{align*}
    P &\approx \pi (3(a+b) - \sqrt{(3a+b)(a+3b)}) \\
    P_1 &= \pi (a+b) \left( 3 - \sqrt{\frac{(3a+b)(a+3b)}{(a+b)(a+b)}} \right) \\
    &= \pi (a+b) \left( 3 - \sqrt{(2 + \frac{a-b}{a+b})(2 - \frac{a-b}{a+b})} \right) \\
    &= \pi (a+b) (3 - \sqrt{(2+u)(2-u)}) \\
    &= \pi (a+b) (3 - \sqrt{4 - u^2}) \\
    &= \pi (a+b) (3 - \sqrt{4 - x}) \\
    &= \pi (a+b) R_1(x)
\end{align*}

\noindent His second approximation had the form

\begin{align*}
    P &\approx \pi \left( (a + b) + \frac{3(a-b)^2}{10(a+b) + \sqrt{a^2 + 14ab + b^2}}\right) \\
\end{align*}

\noindent which can be written as 

\begin{align*}
    P_2 &= \pi (a+b) \left( 1 + \frac{3(a-b)^2}{(a+b)^2 (10 + \sqrt{\frac{a^2 + 14ab + b^2}{(a+b)^2}})} \right) \\
    &= \pi (a+b) \left( 1 + \frac{\frac{3(a-b)^2}{(a+b)^2}}{10 + \sqrt{4 - 3\frac{(a-b)^2}{(a+b)^2}}} \right) \\
    &= \pi (a+b) \left( 1 + \frac{3x}{10+\sqrt{4-3x}} \right) \\
    &= \pi (a+b) R_2(x)
\end{align*}

\noindent The remarkable accuracy of Ramanujan's approximations, $R_1(x)$ and $R_2(x)$ can be understood by examining the continued fraction expansion of the target power series, $E(x)$. The power series is given by:

\begin{equation}
    E(x) = 1 + \frac{1}{4}x + \frac{1}{64}x^2 + \frac{1}{256}x^3 + \frac{25}{16384}x^4 + \frac{49}{65536}x^5 + \dots
\end{equation}

\section{Continued Fraction Expansion of E(x)}

The transformation of the power series $E(x)$ into a generalized continued fraction is achieved through a systematic process of series inversion known as Viscovatov's algorithm \cite{baker1996pade}. Starting with the power series representation:

\begin{equation}
    E(x) = 1 + \frac{1}{4}x + \frac{1}{64}x^2 + \frac{1}{256}x^3 + \dots
\end{equation}
We seek a continued fraction of the form:
\begin{equation}
    E(x) = 1 + \cfrac{a_1 x}{1 + \cfrac{a_2 x}{1 + \cfrac{a_3 x}{1 + \dots}}}
\end{equation}

\noindent The algorithm determines the partial numerators $\{a_n\}$ iteratively using an "invert and subtract" method based on the identity $Y = 1 + \frac{a x}{ (ax / (Y-1)) }$.

\subsection{\label{ssec:continued_fraction} High-Level Algorithm Summary}
Let $S_0(x) = E(x)$. The iteration proceeds as follows for $n \ge 1$:
\begin{enumerate}
    \item \textbf{Identify Coefficient:} The coefficient $a_n$ is determined by taking the coefficient of the linear term $x$ in the current series $S_{n-1}(x)$.
    \item \textbf{Form Remainder:} A new fractional expression is formed: $\frac{a_n x}{S_{n-1}(x) - 1}$.
    \item \textbf{Invert and Expand:} This fraction is expanded via series inversion to generate the next power series, $S_n(x)$, which must start with a constant term of 1.
    \item \textbf{Repeat:} The process is repeated on $S_n(x)$ to find $a_{n+1}$.
\end{enumerate}

\noindent We determine the coefficients $a_n$ sequentially by computing the series expansion of the reciprocal remainders.
The power series for $E(x)$ begins:
\[
E(x) = 1 + \frac{1}{4}x + \frac{1}{64}x^2 + \frac{1}{256}x^3 + \dots
\]
The first coefficient $a_1$ is simply the coefficient of $x$ in $E(x)$.
\[
a_1 = \frac{1}{4}
\]
We then define the first remainder function $E_1(x)$ such that $E(x) = 1 + \frac{a_1 x}{E_1(x)}$. Rearranging for $E_1(x)$,
\[
E_1(x) = \frac{a_1 x}{E(x) - 1} = \frac{\frac{1}{4}x}{\frac{1}{4}x + \frac{1}{64}x^2 + \frac{1}{256}x^3 + \dots}
\]
Dividing numerator and denominator by $\frac{1}{4}x$, we get
\[
E_1(x) = \frac{1}{1 + \frac{1}{16}x + \frac{1}{64}x^2 + \dots}
\]
Computing the series expansion of this reciprocal (using $(1+u)^{-1} \approx 1-u+u^2$):
\[
E_1(x) = 1 - \frac{1}{16}x - \frac{3}{256}x^2 + O(x^3)
\]

\noindent 
The continued fraction definition implies $E_1(x) = 1 + \frac{a_2 x}{E_2(x)}$. Thus, $a_2$ is the coefficient of the linear term in $E_1(x)$:
\[
a_2 = -\frac{1}{16}
\]
We now solve for the next remainder $E_2(x) = \frac{a_2 x}{E_1(x) - 1}$:
\[
E_2(x) = \frac{-\frac{1}{16}x}{-\frac{1}{16}x - \frac{3}{256}x^2 + \dots} = \frac{-\frac{1}{16}}{-\frac{1}{16} - \frac{3}{256}x + \dots}
\]
Dividing through by $-\frac{1}{16}$:
\[
E_2(x) = \frac{1}{1 + \frac{3}{16}x + \dots} = 1 - \frac{3}{16}x + O(x^2)
\]

\noindent 
Similarly, $a_3$ is the linear coefficient of $E_2(x)$:
\[
a_3 = -\frac{3}{16}
\]

\noindent 
Repeating this procedure yields the subsequent coefficients:
\[
\begin{aligned}
a_4 &= -\frac{3}{16} \\
a_5 &= -\frac{11}{48} \\
a_6 &= -\frac{115}{528} \\
a_7 &= -\frac{14627}{60720} \\
a_8 &= -\frac{18556241}{80741040} 
\end{aligned}
\]

\noindent Applying this algorithm to $E(x)$ yields the following sequence of partial numerators
\begin{equation}
    a_1 = \frac{1}{4}, \quad a_2 = -\frac{1}{16}, \quad a_3 = -\frac{3}{16}, \quad a_4 = -\frac{3}{16}, \quad a_5 = -\frac{11}{48}, \dots
\end{equation}

\noindent We can construct the simplified integer form of the expansion by multiplying the numerator and denominator of successive fraction layers by 4 to clear the denominators.

\begin{equation}
    \label{eq:Bx_CF}
    E(x) = 1 + \cfrac{x}{4 - \cfrac{x}{4 - \cfrac{3x}{4 - \cfrac{3x}{4 - \cfrac{\frac{11}{3}x}{4 - \dots}}}}}
\end{equation}

\subsection{Derivation of $R_1(x)$}

Ramanujan's first approximation is given by $R_1(x) = 3 - \sqrt{4-x}$. To see its connection to the continued fraction, we observe the very first layer of $E(x)$ in Equation \ref{eq:Bx_CF}: $1 + \frac{x}{4 - \dots}$.

\noindent If we assume the simplest possible periodic pattern based on this initial layer — that the partial numerator is always $-x$ and the denominator is always $4 - \dots$ — we define an approximation $R_{cf}(x)$:
\begin{equation}
    R_{cf}(x) = 1 + \cfrac{x}{4 - \cfrac{x}{4 - \cfrac{x}{4 - \dots}}}
\end{equation}
Let the repeating fraction part be $Y$. Then $Y = \frac{x}{4-Y}$. Solving the quadratic equation $Y^2 - 4Y + x = 0$ and choosing the solution that vanishes at $x=0$ yields $Y = 2 - \sqrt{4-x}$. Substituting this back gives:
\begin{equation}
    R_{cf}(x) = 1 + (2 - \sqrt{4-x}) = 3 - \sqrt{4-x}
\end{equation}
This is exactly Ramanujan's formula $R_1(x)$. Its power series expansion matches $E(x)$ for the first three terms ($1, x, x^2$).

\subsection{Derivation of $R_2(x)$}

Ramanujan's more accurate approximation is $R_2(x) = 1 + \frac{3x}{10 + \sqrt{4-3x}}$. This formula arises from matching the continued fraction of $E(x)$ deeper into its expansion. Observing Equation \ref{eq:Bx_CF}, the sequence of partial numerators begins $x, -x, -3x, -3x$. If we assume that after the initial $x, -x$, the pattern of repeating pairs of $-3x$ continues infinitely, we define the approximation $R_{cf}(x)$ as follows:

\begin{equation}
    R_{cf}(x) = 1 + \cfrac{x}{4 - \cfrac{x}{4 - \underbrace{\cfrac{3x}{4 - \cfrac{3x}{4 - \dots}}}_{P}}}
\end{equation}
We isolate the repeating tail $P$. It satisfies the recursive relation $P = \frac{3x}{4-P}$. Solving $P^2 - 4P + 3x = 0$ yields $P = 2 - \sqrt{4-3x}$. Substituting $P$ back into the previous layer of the fraction gives us
\begin{equation}
    4 - P = 4 - (2 - \sqrt{4-3x}) = 2 + \sqrt{4-3x}
\end{equation}
We continue by substituting this result back into the layer above it.
\begin{equation}
    4 - \frac{x}{2 + \sqrt{4-3x}} = \frac{4(2+\sqrt{4-3x}) - x}{2 + \sqrt{4-3x}} = \frac{8 - x + 4\sqrt{4-3x}}{2 + \sqrt{4-3x}}
\end{equation}
Finally, substituting this into the main expression for $R_{cf}(x)$ results in
\begin{align}
    R_{cf}(x) &= 1 + \frac{x(2 + \sqrt{4-3x})}{8 - x + 4\sqrt{4-3x}} \\
    &= 1 + \frac{x(2 + \sqrt{4-3x})(2 - \sqrt{4-3x})}{(8 - x + 4\sqrt{4-3x})(2 - \sqrt{4-3x})} \\
    &= 1 + \frac{3x^2}{2(8-x) - 4(4-3x) + (8 - (8-x))\sqrt{4-3x}} \\
    &= 1 + \frac{3x^2}{10x + x\sqrt{4-3x}}
\end{align}
This leads to the known compact form,
\begin{equation}
    R_2(x) = 1 + \frac{3x}{10 + \sqrt{4-3x}}
\end{equation}
Because its continued fraction structure matches $E(x)$ through the pair of $-3x$ terms, its power series expansion perfectly matches $E(x)$ for the first five terms (up to $x^4$). In summary, Ramanujan's formulas are exact closed-form representations of infinite periodic continued fractions that serve as increasingly accurate asymptotic approximations for the non-periodic continued fraction of $E(x)$. Unfortunately, there is no simple extension of this technique to obtain an improvement over $R_2(x)$.

\section{Improving on Ramanujan's Approximation - Attempt 1}

\subsection{Motivation}
Our starting point is Ramanujan's approximation $R_2(x)$, derived from the continued fraction expansion of the target series $E(x)$.
\[ R_2(x) = 1 + \frac{3x}{10 + \sqrt{4 - 3x}} \]
We established that $R_2(x)$ perfectly matches the series expansion of $E(x)$ for the first 5 terms (up to $x^4$). The deviation begins at the $6^{th}$ term ($x^5$). Our goal is to construct a new approximation, $A_1(x)$, that retains the  advantages of the square root form in $R_2(x)$ but extends the accuracy to match the $x^5$ term of $E(x)$. To achieve this, we employ a \textit{perturbation method}. We will introduce a small correction term inside the highly-nested square root of $R_2(x)$. This placement ensures that the correction does not disrupt the match of the lower-order terms ($x^0$ to $x^4$) while providing a parameter to tune the $x^5$ coefficient precisely.

\subsection{Required Correction}

The target series $E(x)$ and $R_2(x)$ can be written as

\begin{align}
    E(x) &\approx 1 + \frac{1}{4}x + \frac{1}{64}x^2 + \frac{1}{256}x^3 + \frac{25}{16384}x^4 + \frac{49}{65536}x^5 + \frac{441}{1048576}x^6 + \frac{1089}{4194304}x^7 + \dots \\
    R_2(x) &\approx 1 + \frac{1}{4}x + \frac{1}{64}x^2 + \frac{1}{256}x^3 + \frac{25}{16384}x^4 + \frac{95}{131072}x^5 + \frac{2571}{8388608}x^6 + \frac{10573}{67108864}x^7 + \dots
\end{align}
First, we determine the exact discrepancy between the target series $E(x)$ and the approximation $R_2(x)$ at the $x^5$ term. Let $a_n^f$ denote the coefficient of the $x^n$ term in the expansion of $f(x)$. Then

\begin{align*}
    a_5^E &= \frac{49}{65536} = \frac{98}{131072} \\
    a_5^{R_2} &= \frac{95}{131072}
\end{align*}
The required correction, $\Delta$, needed to bring the coefficient up to the target value is:
\begin{equation}
    \Delta = a_5^E - a_5^{R_2} = \frac{98 - 95}{131072} = \frac{3}{131072}
\end{equation}

\subsection{Defining the Form of Perturbation}
The structure of $R_2(x)$ is $1 + 3x \cdot D(x)^{-1}$, where $D(x) = 10 + \sqrt{4-3x}$. To affect the $x^5$ term of the final expression, we must introduce a perturbation of order $x^4$ inside the denominator, which then gets multiplied by the outer $3x$.\\

\noindent We propose the perturbed form $A_1(x)$ by adding a term $kx^4$ inside the radical expression.
\begin{equation}
    A_1(x) = 1 + \frac{3x}{10 + \sqrt{4 - 3x + kx^4}}
\end{equation}
Here, $k$ is the unknown constant we must determine.
We now linearize the effect of the small perturbation $kx^4$ on the expansion coefficients. Let $D_{R_2}(x) = 10 + \sqrt{4-3x}$ be the denominator in the original approximation.
Let $D_{A_1}(x) = 10 + \sqrt{4 - 3x + kx^4}$ be the denominator in the new approximation.\\

\noindent We can approximate the change in the square root term using the first derivative of $\sqrt{u}$ with respect to $u$: $\delta\sqrt{u} \approx \frac{1}{2\sqrt{u}} \delta u$. In our case, 
$u$ is $4-3x$ and the perturbation $\delta u$ is $kx^4$.
\[ \sqrt{4 - 3x + kx^4} \approx \sqrt{4-3x} + \frac{kx^4}{2\sqrt{4-3x}} \]
Therefore, the denominator $D_{A_1}(x)$ can be approximated as
\[ D_{A_1}(x) \approx D_{R_2}(x) + \frac{kx^4}{2\sqrt{4-3x}} \]
Next, we determine the effect on the reciprocal of the denominator. Using the approximation $\frac{1}{D+\delta} \approx \frac{1}{D} - \frac{\delta}{D^2}$,
\[ \frac{1}{D_{A_1}(x)} \approx \frac{1}{D_{R_2}(x)} - \frac{\left( \frac{kx^4}{2\sqrt{4-3x}} \right)}{D_{R_2}(x)^2} = \frac{1}{D_{R_2}(x)} - \frac{kx^4}{2\sqrt{4-3x} \cdot D_{R_2}(x)^2} \]
We want to find the change to the $x^4$ coefficient of this reciprocal. The second term on the RHS is already of order $x^4$. To find its contribution to the $x^4$ coefficient, we evaluate the rest of that term at $x=0$.
\[ \text{Change to } a_4^D \approx - \frac{k}{2\sqrt{4-0} \cdot D_{R_2}(0)^2} \]
Noting that $D_{R_2}(0) = 10 + \sqrt{4} = 12$,
\[ \text{Change to } a_4^D \approx - \frac{k}{2(2)(12^2)} = -\frac{k}{576} \]
Finally, the expression $A_1(x)$ multiplies this reciprocal by $3x$. Therefore, the change to the $x^5$ coefficient of $A_1(x)$ is $3$ times the change calculated above.
\[ \text{Change to } a_5^{A_1} = 3 \cdot \left( -\frac{k}{576} \right) = -\frac{3k}{576} = -\frac{k}{192} \]

\noindent We set this calculated change equal to the required correction $\Delta$.

\begin{align}
    -\frac{k}{192} &= \frac{3}{131072} \\
    k &= -192 \cdot \frac{3}{131072} \\
    &= -\frac{576}{131072} \\
    &= -\frac{9}{2048}
\end{align}

\noindent Substituting this value of $k$ back into our proposed form yields the approximation $A_1(x)$ which perfectly matches the first 6 terms of $E(x)$,
\begin{align}
    A_1(x) &= 1 + \frac{3x}{10 + \sqrt{4 - 3x - \frac{9}{2048}x^4}} \\
    &= 1 + \frac{64 \cdot 3x}{64 \left( 10 + \sqrt{4 - 3x - \frac{9}{2048}x^4} \right)} \\
    &= 1 + \frac{192x}{640 + 64\sqrt{4 - 3x - \frac{9}{2048}x^4}} \\
    &= 1 + \frac{192x}{640 + \sqrt{16384 - 12288x - 18x^4}}
\end{align}

\noindent Thus, the final approximation is
\[
\boxed{A_{1}(x) = 1 + \frac{192x}{640 + \sqrt{16384 - 12288x - 18x^4}}}
\]

\section{Can we do Better? - Attempt 2}

The continued fraction expansion of $E(x)$ was derived in section~\ref{ssec:continued_fraction}. We found the coefficients $a_i$ to be

\[
\begin{aligned}
a_1 &= -\frac{1}{4} = 0.25 \\
a_2 &= -\frac{1}{16} = -0.0625 \\
a_3 &= -\frac{3}{16} = -0.1875 \\
a_4 &= -\frac{3}{16} = -0.1875 \\
a_5 &= -\frac{11}{48} \approx -0.2292 \\
a_6 &= -\frac{115}{528} \approx -0.2178 \\
a_7 &= -\frac{14627}{60720} \approx -0.2409 \\
a_8 &= -\frac{18556241}{80741040} \approx -0.2298
\end{aligned}
\] \\

\noindent Analyzing the coefficients for $n \ge 5$, we observe they oscillate around $-0.2288$. We construct an approximation by assuming the tail stabilizes immediately after $a_4$ with a constant coefficient:
\[
a_n = -0.2288 \approx -\frac{23}{100} \quad \text{for all } n \ge 5
\]

\subsection{Derivation of the Closed Form}
Let $T(x)$ represent the periodic tail starting at the 5th term.
\[
T(x) = \cfrac{-0.2288x}{1 + T(x)}
\]
Solving $T(x)^2 + T(x) + 0.2288x = 0$ yields the solution
\[
T(x) = \frac{-1 + \sqrt{1 - 0.915x}}{2}
\]
Defining $S = \sqrt{1 - 0.915x}$, the denominator term for the truncated fraction is $1 + T(x) = \frac{1+S}{2}$. We substitute this into the continued fraction truncated at $a_4$.
\[
A_{h2}(x) = 1 + \cfrac{a_1 x}{1 + \cfrac{a_2 x}{1 + \cfrac{a_3 x}{1 + \cfrac{a_4 x}{\frac{1+S}{2}}}}}
\]
We substitute the exact coefficients 
\[
a_1 = \frac{1}{4}, \quad a_2 = -\frac{1}{16}, \quad a_3 = -\frac{3}{16}, \quad a_4 = -\frac{3}{16}
\]
and simplify the nested fraction from the bottom up. \\

\paragraph{Level 4}
\[
1 + \frac{a_4 x}{(1+S)/2} = 1 - \frac{3x}{8(1+S)} = \frac{8(1+S)-3x}{8(1+S)}
\]

\paragraph{Level 3}
\[
\begin{aligned}
    1 + \frac{a_3 x}{\text{Level 4}} &= 1 - \frac{3x}{16} \cdot \frac{8(1+S)}{8(1+S)-3x} \\
    &= \frac{2(8(1+S)-3x) - 3x(1+S)}{2(8(1+S)-3x)} \\
    &= \frac{(16-9x)+(16-3x)S}{(16-6x)+16S}
\end{aligned}
\]

\paragraph{Level 2}
\[
\begin{aligned}
    1 + \frac{a_2 x}{\text{Level 3}} &= 1 - \frac{x}{16} \cdot \frac{2((8-3x)+8S)}{(16-9x)+(16-3x)S} \\
    &= \frac{(128-72x)+(128-24x)S - (8x-3x^2+8xS)}{(128-72x)+(128-24x)S} \\
    &= \frac{(128-80x+3x^2)+(128-32x)S}{(128-72x)+(128-24x)S}
\end{aligned}
\]

\noindent Finally, 
\paragraph{Level 1}
\[
\begin{aligned}
    1 + \frac{a_1 x}{\text{Level 2}} &= 1 + \frac{x}{4} \cdot \frac{(128-72x)+(128-24x)S}{(128-80x+3x^2)+(128-32x)S} \\
    &= 1 + \frac{x \cdot ((32-18x)+(32-6x)S)}{(128-80x+3x^2)+(128-32x)S} \\
    &= \frac{(128-80x+3x^2)+(128-32x)S + (32x-18x^2+(32x-6x^2)S)}{{(128-80x+3x^2)+(128-32x)S}} \\
    &= \frac{(128-48x-15x^2)+(128-6x^2)S}{{(128-80x+3x^2)+(128-32x)S}}
\end{aligned}
\]

\noindent We are able to derive the final rational form,
\[
A_{2}(x) = \frac{P(x) + Q(x)S}{R(x) + U(x)S},
\]
where $P(x), Q(x), Q(x), R(x)$ are
\[
\begin{aligned}
P(x) &= 128 - 48x - 15x^2 \\
Q(x) &= 128 - 6x^2 \\
R(x) &= 128 - 80x + 3x^2 \\
U(x) &= 128 - 32x
\end{aligned}
\]
Thus, the final approximation is
\[
\boxed{A_{2}(x) = \frac{(128 - 48x - 15x^2) + (128 - 6x^2)\sqrt{1-0.915x}}{(128 - 80x + 3x^2) + (128 - 32x)\sqrt{1-0.915x}}}
\]

\section{Experimental Results and Numerical Analysis}

To evaluate the practical performance of Ramanujan's original approximations and our derived perturbations, we conducted a numerical experiment over the full interval of convergence, $x \in [0, 1)$.\\

\noindent Using the double-precision floating-point arithmetic of python math libraries, we established a benchmark by computing the target series $E(x)$ by summing the first 50 terms.
\[ E_{50}(x) = \sum_{n=0}^{49} \left\{ \frac{1}{2n-1} \frac{1}{4^n} \binom{2n}{n} \right\}^2 x^n \]
Summing to 50 terms ensures that the truncation error is significantly smaller than standard machine epsilon ($ \approx 2.2 \times 10^{-16}$), providing an effectively exact ground truth for comparison.\\

Cantrell ~\cite{Cantrell2004} provides a different approximation that is worse than Ramanujan's expression for ellipses with lower eccentricity, but is better when the eccentricity gets closer to 1. His expression has the form

\begin{align*}
    P_{cantrell} &= 4(a + b) - 2(4-p) \frac{ab}{f} \\
    f &= p(a+b) + \frac{1-2p}{k+1}\sqrt{(a+kb)(ka+b)},
\end{align*}

\noindent where parameters $p$ and $k$ are jointly tuned. The best values prescribed are $k = 74$ and $p = 0.410$. Using techniques similar from those presented in section ~\ref{sec:sec3}, we can rewrite this formula as

\begin{align*}
    P_{cantrell} &= \pi (a+b) \left( \frac{4}{\pi} - \frac{4-\pi}{2} \frac{(1-x)}{(p + \frac{1-2p}{(k+1)} \sqrt{(\frac{k+1}{2})^2 - (\frac{k-1}{2})^2 x}} \right) \\
    &= \pi (a+b) C(x)
\end{align*}

\noindent In order to evaluate the quality of the various approximations, we compared the following five expressions.
\begin{enumerate}
    \item $R_1(x) = 3 - \sqrt{4-x}$ (Ramanujan-1)
    \item $R_2(x) = 1 + \frac{3x}{10 + \sqrt{4-3x}}$ (Ramanujan-2)
    \item $C(x) = \frac{4}{\pi} - \frac{4-\pi}{2} \frac{(1-x)}{(p + \frac{1-2p}{(k+1)} \sqrt{(\frac{k+1}{2})^2 - (\frac{k-1}{2})^2 x}}$ (Cantrell)
    \item $A_1(x) = 1 + \frac{192x}{640 + \sqrt{16384 - 12288x - 18x^4}}$ (Ours-1)
    \item $A_2(x) = \frac{(128 - 48x - 15x^2) + (128 - 6x^2)\sqrt{1-0.915x}}{(128 - 80x + 3x^2) + (128 - 32x)\sqrt{1-0.915x}}$ (Ours-2).
\end{enumerate}

\subsection{Visual Comparison}
Figure \ref{fig:comparison_plot} shows the direct plots of these four functions alongside the target $E(x)$. Due to the high quality of Ramanujan's initial insights, all four approximations follow the shape of the target curve exceptionally well. On a standard linear scale, the curves are nearly indistinguishable, highlighting that even the simplest approximation, $R_1(x)$, is sufficient for many low-precision applications.

\begin{figure}[htbp]
    \centering
    \includegraphics[width=\textwidth]{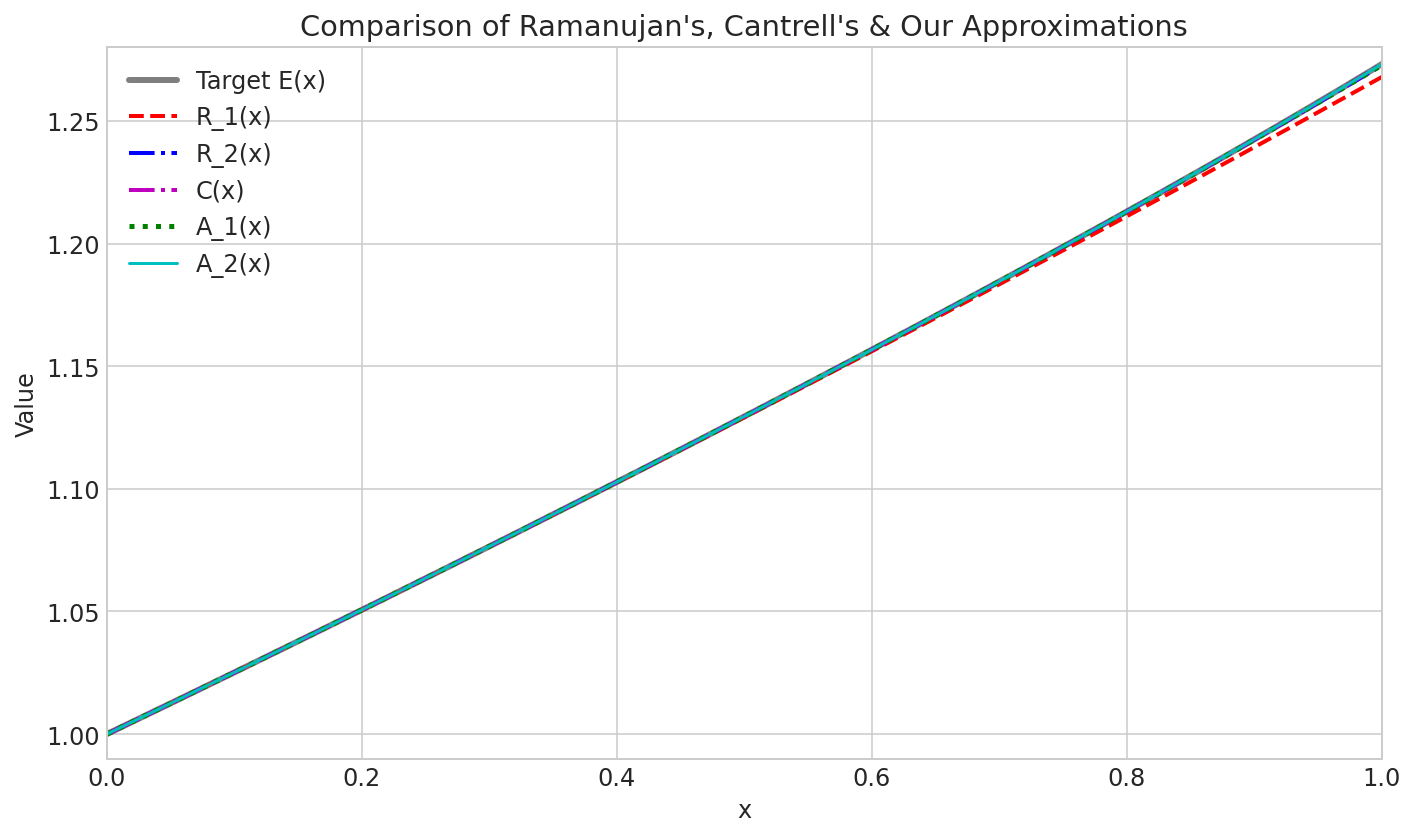}
    \caption{Direct comparison of Ramanujan's approximations $R_1(x)$ and $R_2(x)$, Cantrell's formula $C(x)$ and our approximations $A_1(x)$ and $A_2(x)$, against the target series $E(x)$ over the interval $[0,1)$. The remarkable accuracy of all functions makes them difficult to distinguish on this scale.}
    \label{fig:comparison_plot}
\end{figure}

\subsection{Relative Error Analysis}
To quantify the differences, we plotted the relative error, defined as $\frac{|\text{Approx}(x) - E(x)|}{E(x)}$, on a logarithmic scale in Figure \ref{fig:error_plot}. This plot clearly visualizes the distinct behaviors of the approximations.

\begin{figure}[htbp]
    \centering
    \includegraphics[width=\textwidth]{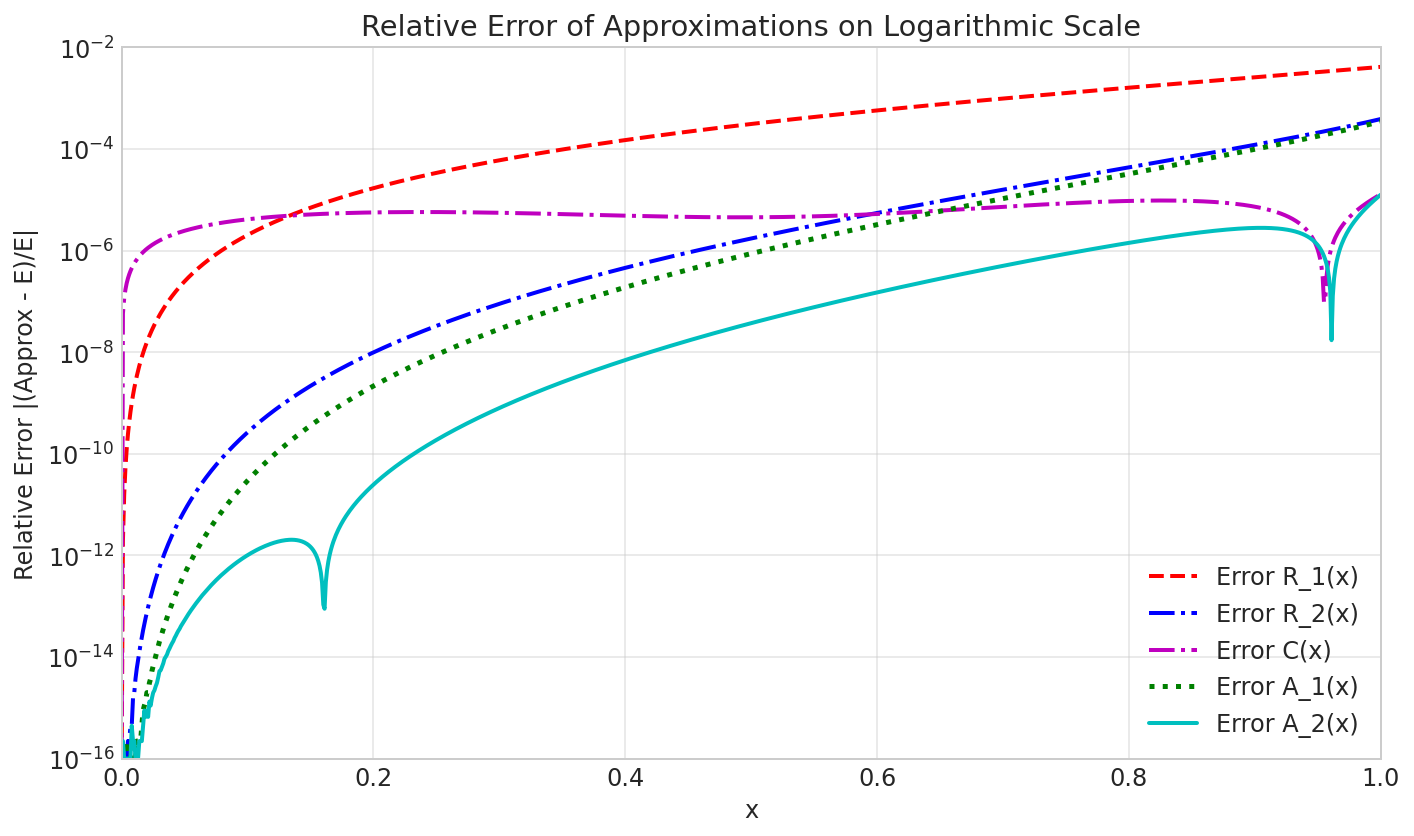}
    \caption{The relative error of the five approximations plotted on a logarithmic y-axis. The slopes near $x=0$ indicate the order of local accuracy. $A_2(x)$ demonstrates superior performance across the entire domain. Note that the y-axis is shown at log scale.}
    \label{fig:error_plot}
\end{figure}


\noindent In summary, while Ramanujan's $R_2(x)$ is an excellent general-purpose approximation, our approximation $A_2(x)$ is generated by approximating all the tail coefficients in the continued fraction expansion of $E(x)$ with a single value. Although not as elegant as Ramanujan's approximation, it provides vastly superior precision across the entire interval $[0,1)$.

\bibliographystyle{plain} 
\bibliography{main} 

\end{document}